\newlength{\depthofsumsign}
\newlength{\totalheightofsumsign}
\newlength{\heightanddepthofargument}
\newcommand*{\DivideLengths}[2]{%
  \strip@pt\dimexpr\number\numexpr\number\dimexpr#1\relax*65536/\number\dimexpr#2\relax\relax sp\relax
}
\numberwithin{equation}{section}
\newcommand{\be}{\begin{equation}}
\newcommand{\ee}{\end{equation}}
\newcommand{\act}[1]{\overset{#1}{\rhd}}
\newtheorem{theorem}{Theorem}
\newtheorem{definition}{Definition}
\newtheorem{proposition}{Proposition}
\newtheorem{remark}{Remark}
\newtheorem{example}{Example}
\title{F-algebra--Rinehart Pairs and Super F-algebroids}
\author{John Alexander Cruz Morales, Javier A. Gutierrez and Alexander Torres-Gomez}
\date{\small \today}
\begin{document}
\maketitle
\begin{abstract}
We define F-algebra--Rinehart pairs and super F-algebroids and study the connection between them.
\end{abstract}

\tableofcontents 


\section{Introduction}
The concept of an F-algebroid structure over a vector bundle was introduced in  \cite{CMTG} in analogy to the idea of a Lie algebroid in order to study the notion of F-manifolds in more general vector bundles than tangent bundles.\\

It is known that the notion of Lie algebroid is related to that of a Lie-Rinehart pair (from now on denoted by  L-R pair, see the appendix A for its definition and some examples), in the sense that given a Lie algebroid is possible to construct a L-R pair by considering the algebra of sections of a vector bundle and the algebra of continuous functions over the manifold. Inspired by this situation in this note we define an F-algebra--Rinehart pair (from now on denoted by F-R pair) and showed that under some considerations an F-algebroid can be described algebraically as an F-R pair. This is the main result of the present note. \\

With the purpose of having a coherent definition of L-R pairs we develop further the theory of F-algebras and F-algebra modules. As far as we know, the notion of F-algebra module have not been studied in the literature and in this note we try to fill this gap. So, we start by defining the category of F-algebras. Then, we define the notion of a module over an F-algebra. We find these results interesting by their own and, moreover, they are also useful for a complete characterization of F-algebras (something which have not been fully developed). The notion of F-modules allows us to define the notion of F-R pair. This is an algebraic construction that generalizes the idea of L-R pair.  \\

Trying to related the F-algebroids of \cite{CMTG} with the F-R pairs introduced in this note we see the need of extending the concept of F-algebroid by considering supermanifolds instead of just manifolds. The idea of super F-algebroids that arises in this context is reached thanks to the consideration of the notion of a smooth F-algebroid as a ringed space. With this notion we can prove that a super F-algebroid can be considered as an F-R pair. \\

We end the note with some final remarks about some work in progress and future projects.

\section{F-Algebras}

An F-algebra is a generalization of a unital commutative Poisson algebra where the Leibniz property has been weakened. 

\begin{definition}
Let $\mathcal F$ be a vector space. An \textbf{F-algebra} is a triple $(\mathcal F, \circ, [\;, \;])$ where 
\begin{enumerate}[i)]
\item $(\mathcal F, \circ)$ is an associative commutative algebra with a unit $e$;

\item $(\mathcal F, [\; ,\;])$ is a Lie algebra;

\item for every $X,Y,W \in \mathcal F$, we define the ``Leibnizator" $L(X,Z, W)$ as 
\be \label{leib1}
L(X,Z, W):= [X, Z \circ W]-[X, Z] \circ W-Z \circ [X, W] \ ;
\ee

\item the Leibnizator is a derivation in its first entry, that is, 
\be \label{falg1}
L(X \circ Y, Z, W)= X \circ L(Y,Z, W)+Y \circ L(X,Z,W) \ .
\ee
\end{enumerate}
\end{definition}

\begin{remark} In \cite{HM} the Leibnizator is denoted by $P_X(Z,W)$. We think that they used the letter $P$ to denote it because they empathizes that (\ref{leib1}) measures the deviation of the structure $(\mathcal F, \circ, [\;, \;])$ from that of a Poisson algebra on $(\mathcal F, \circ)$. 
\end{remark}

\begin{proposition}\label{PO}
Every Poisson algebra is an F-algebra.
\end{proposition}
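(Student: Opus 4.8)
The plan is to unwind the definition of a Poisson algebra and observe that the Leibnizator vanishes identically, so that axiom (iv) is automatic. Recall that a (unital, commutative) Poisson algebra is by definition a triple $(\mathcal F, \circ, [\;,\;])$ in which $(\mathcal F, \circ)$ is an associative commutative algebra with unit, $(\mathcal F, [\;,\;])$ is a Lie algebra, and for every $X \in \mathcal F$ the adjoint map $[X, \;-\;]$ is a derivation of the product $\circ$, i.e.
\be
[X, Z \circ W] = [X, Z] \circ W + Z \circ [X, W] \qquad \text{for all } X, Z, W \in \mathcal F .
\ee
Thus conditions i) and ii) in the definition of an F-algebra hold by hypothesis, and it remains only to check iii) and iv).

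For iii), the Leibnizator $L$ is defined for any triple $(\mathcal F, \circ, [\;,\;])$ satisfying i) and ii), so there is nothing to verify; one simply records that, by the derivation property just stated,
\be \label{leibzero}
L(X, Z, W) = [X, Z \circ W] - [X, Z] \circ W - Z \circ [X, W] = 0
\ee
for all $X, Z, W \in \mathcal F$. For iv), I would substitute \eqref{leibzero} into both sides of \eqref{falg1}: the left-hand side is $L(X \circ Y, Z, W) = 0$, and the right-hand side is $X \circ L(Y, Z, W) + Y \circ L(X, Z, W) = X \circ 0 + Y \circ 0 = 0$. Hence \eqref{falg1} holds trivially, and $(\mathcal F, \circ, [\;,\;])$ is an F-algebra.

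There is essentially no hard step here: the statement is a sanity check confirming that F-algebras genuinely generalize Poisson algebras, and the only thing one must be slightly careful about is the standing convention that "Poisson algebra" means a \emph{unital commutative} Poisson algebra, which is exactly what is needed for axiom i) and which is consistent with the motivating remark preceding the definition. If one wished to allow non-unital or noncommutative Poisson algebras, the statement would have to be adjusted accordingly, but under the convention in force the proof is the two-line computation above.
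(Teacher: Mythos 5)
Your proof is correct and takes essentially the same approach as the paper: both arguments reduce to observing that the Leibniz (derivation) property of a Poisson bracket forces the Leibnizator to vanish identically, so that condition (\ref{falg1}) reads $0=0$. If anything, your write-up states the implication in the logically cleaner direction (Poisson $\Rightarrow L\equiv 0 \Rightarrow$ (iv)), whereas the paper's phrasing starts from $L\equiv 0$; the mathematical content is identical.
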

\begin{proof}
It is easy to see that if $L(X,Z,W)\equiv 0$, for every $X,Y,Z \in \mathcal F$, then the left hand side of (\ref{leib1}) is zero and we obtain the Leibniz property that makes the Lie bracket a Poisson bracket. Moreover, equation (\ref{falg1}) is trivially satisfied. 
\end{proof}

\begin{proposition}
Let $(A, \circ)$ be an associative commutative algebra with unit, together with an abelian Lie bracket $[\;, \;]_a$. Then the triple $(A, \circ, [\;, \;]_a)$ is an F-algebra.
\end{proposition}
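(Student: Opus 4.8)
The plan is to verify the four axioms of an F-algebra directly for the triple $(A, \circ, [\;, \;]_a)$. Axioms i) and ii) are immediate: $(A,\circ)$ is an associative commutative algebra with unit by hypothesis, and $(A,[\;,\;]_a)$ is a Lie algebra since $[\;,\;]_a$ is an abelian Lie bracket (an abelian Lie bracket is in particular a Lie bracket, so the Jacobi identity and antisymmetry hold trivially).

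The only real content lies in checking that the Leibnizator $L$ as defined in (\ref{leib1}) behaves correctly. First I would observe that since $[\;,\;]_a$ is abelian, $[X, Z\circ W]_a = 0$, $[X,Z]_a = 0$, and $[X,W]_a = 0$ for all $X, Z, W \in A$; hence
\be
L(X,Z,W) = [X, Z\circ W]_a - [X,Z]_a \circ W - Z\circ [X,W]_a = 0 - 0 - 0 = 0 \, .
\ee
So the Leibnizator vanishes identically. Then axiom iv), equation (\ref{falg1}), reads $0 = X\circ 0 + Y \circ 0 = 0$, which holds trivially. This is really the same mechanism as in Proposition \ref{PO}: once $L \equiv 0$, the derivation property of $L$ in its first entry is automatic.

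There is essentially no obstacle here; the proposition is a degenerate special case and the proof is a one-line observation that an abelian bracket forces $L \equiv 0$. If anything, the only point worth stating carefully is that an abelian Lie bracket genuinely qualifies as a Lie bracket for axiom ii), so that the triple is a legitimate instance of the definition. One could alternatively phrase the result as a corollary of Proposition \ref{PO} by noting that $(A, \circ, [\;,\;]_a)$ with an abelian bracket is trivially a Poisson algebra (the Leibniz rule $[X, Z\circ W]_a = [X,Z]_a \circ W + Z \circ [X,W]_a$ reduces to $0 = 0$), but giving the direct argument via the Leibnizator is cleaner and more in the spirit of the preceding definition.
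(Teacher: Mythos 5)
Your proof is correct and follows essentially the same route as the paper: the abelian bracket kills every term of the Leibnizator, so $L\equiv 0$ and equation (\ref{falg1}) holds trivially with zero on both sides. The extra remarks (explicit verification of axioms i) and ii), and the alternative phrasing via Proposition \ref{PO}) are fine but add nothing essential beyond the paper's one-line argument.
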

\begin{proof}
Note that the Leibnizator defined in terms of $[\;, \;]_a$ vanishes because every term on it is zero. Thus, equation (\ref{falg1}) is trivially satisfied (you get zero on both the left hand side and the right hand side).
\end{proof}

A homomorphism between F-algebras is defined in a natural way.

\begin{definition}
Let $(\mathcal F_1, \circ_1, [\;, \;]_1)$ and $(\mathcal F_2, \circ_2, [\;, \;]_2)$ be two F-algebras. A homomorphism $\rho: (\mathcal F_1, \circ_1, [\;, \;]_1) \to (\mathcal F_2, \circ_2, [\;, \;]_2)$ is a map between $\mathcal F_1$ and $\mathcal F_2$ which is at the same time a homomorphism of the associative commutative algebras $(\mathcal F_1, \circ_1) \to (\mathcal F_2, \circ_2)$ and a Lie algebras  homomorphism $(\mathcal F_1, [\;, \;]_1) \to (\mathcal F_2, [\;, \;]_2)$.

\end{definition}

Then, we can define the category of F-algebras as the category whose objects are F-algebras and its morphism F-algebras homomorphism. We will denote this category by $\mathbf{Falg}$. \\

Motivated by the theory of Frobenius manifolds and the theory of F-manifolds, where F-algebras play a central role, we want to study direct sum and tensor product of F-algebras. Although the direct sum and the tensor product of F-algebras, with the natural definitions of the product and the bracket given bellow, are not in general F-algebras they satisfy almost all the properties (missing only one in each case).\\

Let $(\mathcal F_1, \circ_1, [\;, \;]_1)$ and $(\mathcal F_2, \circ_2, [\;, \;]_2)$ be two F-algebras and  let $X_1, Y_1, Z_1 \in \mathcal F_1$ and $X_2, Y_2, Z_2 \in \mathcal F_2$.

\paragraph{Direct Sum of F-algebras.}  Let us define the associative commutative product $\circ$ and the bracket $[\;, \;]_\oplus$ on the direct sum $\mathcal F_1 \oplus \mathcal F_2$ as
\be
(X_1\oplus X_2) \circ (Y_1\oplus Y_2):=(X_1 \circ_1 Y_1) \oplus (X_2 \circ_2 Y_2) \ ,
\ee
and
\be
[X_1\oplus X_2, Y_1\oplus Y_2]_\oplus:=[X_1,Y_1]_1 \oplus [X_2, Y_2]_2 \ .
\ee
Moreover, the Leibnizator $L$ on $\mathcal F_1 \oplus \mathcal F_2$ can be written as a direct sum of the Leibnizators $L_1$ on $\mathcal F_1$ and $L_2$ on $\mathcal F_2$, that is, 
\be
L(X_1 \oplus X_2, Y_1 \oplus Y_2, Z_1 \oplus Z_2)=L(X_1, Y_1, Z_1) \oplus L(X_2,Y_2, Z_2) \ .
\ee
Using these definitions is easy to check that: the product $\circ$ is commutative and associative; the bracket $[\; ,\; ]_\oplus$ satisfies Jacobi identity, and it is symmetric; the Leibnizator $L$ is a derivation in the first entry.\\

Thus, the triplet $(\mathcal F_1 \oplus \mathcal F_2, \circ, [\;, \;]_\oplus)$ is not an F-algebra because the bracket $[\; , \; ]$ is not anti-symmetric (but symmetric instead).

\paragraph{Tensor Product of F-algebras.} In this case, let us define the associative commutative product $\circ$ and the Lie bracket $[\;, \;]_\otimes$ on the tensor product $\mathcal F_1 \otimes \mathcal F_2$ as
\be
(X_1\otimes X_2) \circ (Y_1\otimes Y_2):=(X_1 \circ_1 Y_1) \otimes (X_2 \circ_2 Y_2) \ ,
\ee
and
\be
[X_1\otimes X_2, Y_1\otimes Y_2]_\otimes:=[X_1,Y_1]_1 \otimes (X _2 \circ_2 Y_2) + (X_1 \circ Y_1)\otimes [X_2, Y_2]_2 \ .
\ee
Moreover, the Leibnizator $L$ on $\mathcal F_1 \otimes \mathcal F_2$ can be written in terms of the Leibnizators $L_1$ on $\mathcal F_1$ and $L_2$ on $\mathcal F_2$ as
\be\label{tenlei}
L(X_1\otimes X_2, Y_1\otimes Y_2, Z_1\otimes Z_2)= L_1(X_1,Y_1,Z_1)\otimes (X_2 \circ_2 Y_2 \circ_2 Z_2)+(X_1 \circ_1 Y_1 \circ_1 Z_1) \otimes L_2(X_2,Y_2,Z_2) \ .
\ee 

From these definitions it follows that: the product $\circ$ is commutative and associative; the bracket $[\; , \; ]_\otimes$ is anti-symmetric; the Leibnizator $L$ is a derivation in the first entry. The bracket $[\; , \; ]_\otimes$ does not satisfy, in general, Jacobi identity.\\

Let us denote the composition of terms that have to vanish on the Jacobi identity by $\text{Jacobi}_\otimes(X_1 \otimes X_2, Y_1 \otimes Y_2, Z_1 \otimes Z_2)$, that is, 
\begin{align}
\text{Jacobi}_\otimes &(X_1 \otimes X_2, Y_1 \otimes Y_2, Z_1 \otimes Z_2)= \notag\\
&[X_1 \otimes X_2, [Y_1 \otimes Y_2, Z_1 \otimes Z_2]_\otimes ]_\otimes+ [Y_1 \otimes Y_2, [Z_1 \otimes Z_2, X_1 \otimes X_2]_\otimes ]_\otimes + [Z_1 \otimes Z_2, [X_1 \otimes X_2, Y_1 \otimes Y_2]_\otimes ]_\otimes \ .
\end{align}
Thus, we obtain the following equation, instead of the Jacobi identity,
\begin{align} \label{tenjac}
&\text{Jacobi}_\otimes(X_1 \otimes X_2, Y_1 \otimes Y_2, Z_1 \otimes Z_2)= \notag \\
&L_1(X_1,Y_1, Z_1) \otimes \left(  X_2 \circ_2 [Y_2, Z_2]_2  \right)+ L_1(Y_1,Z_1, X_1) \otimes \left(  Y_2 \circ_2 [Z_2, X_2]_2  \right)+ L_1(Z_1,X_1, Y_1) \otimes \left(  Z_2 \circ_2 [X_2, Y_2]_2  \right)+ \notag \\
&  \left(  X_1 \circ_1 [Y_1, Z_1]_1  \right)  \otimes L_2(X_2,Y_2, Z_2)+ \left(  Y_1 \circ_1 [Z_1, X_1]_1  \right)  \otimes L_2(Y_2,Z_2, X_2)+\left( Z_1 \circ_1 [X_1, Y_1]_1 \right)  \otimes L_2(Z_2,X_2, Y_2)  \ .
\end{align}

Then, the triplet $(\mathcal F_1 \otimes \mathcal F_2, \circ, [\;, \;]_\otimes)$ is not, in general, an F-algebra because the bracket $[\; , \; ]_\otimes$ does not satisfy Jacobi identity. However, there are some cases where the tensor product of two F-algebras is again an F-algebra.\\

\begin{proposition}\label{tenpoi}
Let $\mathcal{F}_1$ and $\mathcal{F}_2$ be two F-algebras with trivial Leibnizator, i.e. they are Poisson algebras. Then $\mathcal{F}_1 \otimes \mathcal{F}_2$ is an F-algebra. In fact, $\mathcal{F}_1 \otimes \mathcal{F}_2$ is a Poisson algebra.
\end{proposition}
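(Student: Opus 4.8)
The plan is to verify the two F-algebra axioms that could fail for a tensor product---namely, the Jacobi identity for $[\;,\;]_\otimes$ and the derivation property (\ref{falg1}) of the Leibnizator---and show that both hold automatically once $L_1 \equiv 0$ and $L_2 \equiv 0$. Since $\mathcal F_1$ and $\mathcal F_2$ are Poisson algebras by hypothesis (using Proposition \ref{PO} for the reverse reading), we may freely substitute $L_1 = L_2 = 0$ into the structural identities already derived in the excerpt.

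First I would observe that the derivation property of the Leibnizator is immediate: by (\ref{tenlei}), the Leibnizator $L$ on $\mathcal F_1 \otimes \mathcal F_2$ is a sum of two terms, each carrying a factor $L_1(\cdots)$ or $L_2(\cdots)$, so $L \equiv 0$ on $\mathcal F_1 \otimes \mathcal F_2$. Hence (\ref{falg1}) holds trivially (both sides vanish), and in fact this already shows that $\mathcal F_1 \otimes \mathcal F_2$ will be a Poisson algebra as soon as we know it is an F-algebra, since a vanishing Leibnizator is exactly the Leibniz/Poisson condition (the converse direction of Proposition \ref{PO}). Next I would invoke (\ref{tenjac}): every one of the six summands on the right-hand side contains a factor of the form $L_1(\cdot,\cdot,\cdot)$ or $L_2(\cdot,\cdot,\cdot)$, so the right-hand side vanishes identically, giving $\mathrm{Jacobi}_\otimes \equiv 0$, i.e. the Jacobi identity holds for $[\;,\;]_\otimes$. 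Antisymmetry of $[\;,\;]_\otimes$, commutativity and associativity of $\circ$, and the existence of the unit $e_1 \otimes e_2$ were already recorded in the discussion preceding the proposition and require no Poisson hypothesis.

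Combining these observations, $(\mathcal F_1 \otimes \mathcal F_2, \circ, [\;,\;]_\otimes)$ satisfies all four defining conditions of an F-algebra, and since its Leibnizator vanishes it is moreover a Poisson algebra. I do not anticipate a genuine obstacle here: the entire content is that the two ``defect'' identities (\ref{tenlei}) and (\ref{tenjac}) were written so that each term is manifestly proportional to $L_1$ or $L_2$, so setting these to zero collapses everything. The only point requiring a word of care is the bookkeeping that the Leibniz property for the bracket $[\;,\;]_\otimes$---i.e. that it is a Poisson bracket with respect to $\circ$---is genuinely equivalent to $L \equiv 0$ on the tensor product, which is just the definition of the Leibnizator applied in the tensor algebra; this is what upgrades ``F-algebra'' to ``Poisson algebra'' in the conclusion.
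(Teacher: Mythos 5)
Your proposal is correct and follows essentially the same route as the paper's proof: both arguments substitute $L_1 \equiv 0$ and $L_2 \equiv 0$ into (\ref{tenjac}) to recover the Jacobi identity and into (\ref{tenlei}) to see that the Leibnizator on $\mathcal F_1 \otimes \mathcal F_2$ vanishes, whence the tensor product is a Poisson algebra. Your additional remarks on antisymmetry, associativity, and the unit merely make explicit what the paper records in the discussion preceding the proposition.
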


\begin{proof}
Note that in the case of Poisson algebras we have from equation (\ref{tenjac}) that $\text{Jacobi}_\otimes(X_1 \otimes X_2, Y_1 \otimes Y_2, Z_1 \otimes Z_2)=0$, because $L_1\equiv 0$ and $L_2 \equiv 0$. Moreover, it follows from the definition of the Leibnizator for the tensor product that, in this case, the Leibnizator is zero. Thus, the tensor product is a Poisson algebra. 
\end{proof}

\begin{remark} This proposition recovers, in the language of F-algebras, the known result stating that the tensor product of Poisson algebras is a Poisson algebra. In the context of category theory this means that the category of Poisson algebras is a monoidal category.
\end{remark}

It is natural to ask whether the condition of being Poisson is a necessary condition for the tensor product of two F-algebras to be an F-algebra. The following proposition gives a negative answer for this question. 

\begin{proposition} \label{tenabe}
Let $\mathcal{F}_1$ be an arbitrary F-algebra and $\mathcal{F}_2$ an F-algebra such that its Lie bracket vanishes (that is, it is an abelian Lie bracket). Then $\mathcal{F}_1 \otimes \mathcal{F}_2$ is an F-algebra. 
\end{proposition}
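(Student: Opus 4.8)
The plan is to verify the three defining properties of an F-algebra for the triple $(\mathcal F_1 \otimes \mathcal F_2, \circ, [\;, \;]_\otimes)$, using the fact that almost all of them have already been checked in the general discussion above. We already know that $(\mathcal F_1 \otimes \mathcal F_2, \circ)$ is an associative commutative algebra with unit $e_1 \otimes e_2$, and that $[\;, \;]_\otimes$ is anti-symmetric and that the Leibnizator is a derivation in its first entry (equation~(\ref{tenlei}) together with the derivation property of $L_1$ and $L_2$). Hence the only two things left to check are that $[\;, \;]_\otimes$ satisfies the Jacobi identity, and --- once that is known, so that the triple is genuinely an F-algebra --- nothing else, since property (iv) was already established in full generality.

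First I would specialize the bracket. Since $[\;, \;]_2 \equiv 0$, the definition of $[\;, \;]_\otimes$ collapses to
\be
[X_1\otimes X_2, Y_1\otimes Y_2]_\otimes = [X_1,Y_1]_1 \otimes (X_2 \circ_2 Y_2) \ .
\ee
Then I would substitute this into $\text{Jacobi}_\otimes$, or equivalently simply read off equation~(\ref{tenjac}): every term on the right-hand side of~(\ref{tenjac}) contains a factor of the form $[Y_2,Z_2]_2$, $[Z_2,X_2]_2$, $[X_2,Y_2]_2$ (in the first three summands) or a factor $L_2(X_2,Y_2,Z_2)$ etc.\ (in the last three). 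The first kind of factor vanishes because $[\;, \;]_2 \equiv 0$; the second kind of factor vanishes because an abelian Lie bracket has trivial Leibnizator (as noted in the proof of the second proposition of this section, $L_2 \equiv 0$ when $[\;, \;]_2$ is abelian). Therefore $\text{Jacobi}_\otimes \equiv 0$, which is precisely the Jacobi identity for $[\;, \;]_\otimes$.

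Having established the Jacobi identity, I would then conclude that $(\mathcal F_1 \otimes \mathcal F_2, \circ, [\;, \;]_\otimes)$ satisfies (i), (ii), (iii) and (iv), hence is an F-algebra. I do not anticipate a genuine obstacle here: the content of the proposition is almost entirely bookkeeping on top of the identities~(\ref{tenlei}) and~(\ref{tenjac}) already derived. The one point deserving a sentence of care is making explicit that an abelian bracket forces $L_2 \equiv 0$ (so that the last three summands of~(\ref{tenjac}) also drop out, not just the first three) --- this is what lets us kill \emph{all} of~(\ref{tenjac}) rather than half of it, and it is the reason the hypothesis "$[\;,\;]_2$ abelian" suffices even though $\mathcal F_1$ is completely arbitrary. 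It is also worth remarking that, unlike in the Poisson case, the resulting F-algebra need not be Poisson, since $L_1$ (and hence the tensor-product Leibnizator, via~(\ref{tenlei})) may be nonzero; this is what makes the proposition a genuine counterexample to the necessity of the Poisson condition.
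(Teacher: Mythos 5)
Your proof is correct and follows essentially the same route as the paper's: both reduce the problem to equation~(\ref{tenjac}) and observe that every summand vanishes because $[\;,\;]_2\equiv 0$ forces both the bracket factors and $L_2$ to be zero. Your version is simply more explicit about the remaining axioms and about why $L_2\equiv 0$, which the paper's one-line proof leaves implicit.
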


\begin{proof}
Since the Lie bracket for $\mathcal{F}_2$ vanishes, and then $L_2=0$, it follows from equation (\ref{tenjac}) that $\text{Jacobi}_\otimes(X_1 \otimes X_2, Y_1 \otimes Y_2, Z_1 \otimes Z_2)=0$.
\end{proof}

Thanks to the propositions above, we would like to formulate the following problem: \\

{\bf Problem 0}. What are the conditions on the F-algebras $\mathcal{F}_1$ and $\mathcal{F}_2$ such that their tensor product $\mathcal{F}_1 \otimes \mathcal{F}_2$ is an F-algebra?\\

Because of the form of equations (\ref{tenlei}) and (\ref{tenjac}), for the Leibnizator and the ``weakly Jacobi identity" on the tensor product $\mathcal F_1 \otimes \mathcal F_2$, and inspired on Proposition \ref{tenpoi} and Proposition \ref{tenabe} we conjecture the following answer to the problem above\\

{\bf Conjecture}. The tensor product $\mathcal F_1 \otimes \mathcal F_2$ of two F-algebras is an F-algebra only when the F-algebras are of the type described in Proposition \ref{tenpoi} or Propositon \ref{tenabe}.\\

Now, the category $\mathbf{Falg}$ has a ``geometric'' dual. Before giving a description of it let us first define what we mean by an F-algebra ideal. An F-alg ideal of an F-algebra $\mathcal F$ is an ideal with respect to the product and the Lie bracket defined on $\mathcal F$. We will refer to it just as an ideal. Note that the spectrum $\mathbf{Spec \, \mathcal F}$ is the set of its maximal ideals. Moreover, using standard arguments in Algebraic Geometry we can give $\mathbf{Spec \, \mathcal F}$ some topology. \\

Define $Pois \,  \mathcal F = \{ X \in \mathcal F: \text{the Leibnizator vanishes, that is, } L \equiv 0\}$. It was shown in \cite{HM} that $Pois \, \mathcal F$ is a Poisson subalgebra of $\mathcal F$. Consider $\mathbf{Spec\, Pois \, \mathcal F }$ with its topology. Then, there exists a surjective map 

\begin{center}
$\Upsilon : \mathbf{Spec \, \mathcal F} \rightarrow \mathbf{Spec \, Pois \, \mathcal F}\rightarrow 0 \, .$
\end{center}

\begin{proposition}\label{fib}
Fibers of the map $\Upsilon$ are spectra of Poisson algebras.
\end{proposition}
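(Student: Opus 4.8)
The plan is to recognise $\Upsilon$ as the map on spectra induced by the inclusion $\iota\colon Pois\,\mathcal F\hookrightarrow\mathcal F$ of F-algebras, and then to carry out the usual computation of the fibre of an affine morphism. Fix $\mathfrak p\in\mathbf{Spec}\,Pois\,\mathcal F$. By the standard description of the fibre of a morphism of affine schemes, $\Upsilon^{-1}(\mathfrak p)$ is homeomorphic to $\mathbf{Spec}$ of the commutative algebra
\be
\mathcal F(\mathfrak p):=\mathcal F\otimes_{Pois\,\mathcal F}\bigl(Pois\,\mathcal F/\mathfrak p\bigr)\;\cong\;\mathcal F/\mathfrak p\mathcal F .
\ee
We interpret the statement as asserting that this fibre ring inherits a genuine Poisson structure from $\mathcal F$; the bare topological assertion that a fibre is the spectrum of \emph{some} commutative — hence trivially Poisson — algebra is immediate from the displayed identification. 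Thus the real task is to show that $(\mathcal F(\mathfrak p),\circ,[\;,\;])$ is a well-defined F-algebra whose Leibnizator vanishes identically.

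The first point is that the product and bracket of $\mathcal F$ descend to $\mathcal F(\mathfrak p)$, i.e.\ that the ideal $J$ generated by $\mathfrak p$ in $\mathcal F$ is an F-algebra ideal. Stability under $\circ$ is immediate; for the bracket one expands, by means of the Leibnizator \eqref{leib1}, $[X,p\circ Y]=[X,p]\circ Y+p\circ[X,Y]+L(X,p,Y)$ and checks that all three summands lie in $J$. Here one leans on the structural facts about $Pois\,\mathcal F$ from \cite{HM} — that it is a Poisson subalgebra, so that $\mathfrak p$ is in particular a Lie ideal of it — and on axiom \eqref{falg1}. If the naive $\mathfrak p\mathcal F$ fails to be a Lie ideal, one replaces $J$ by the F-ideal generated by $\mathfrak p$ and notes that, since $\Upsilon$ is surjective, $J\neq\mathcal F$, whence (by maximality of $\mathfrak p$) $J\cap Pois\,\mathcal F=\mathfrak p$ and $\mathbf{Spec}(\mathcal F/J)=\Upsilon^{-1}(\mathfrak p)$, so the passage costs nothing topologically. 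Either way $\mathcal F(\mathfrak p)$ is an F-algebra, and the image of $Pois\,\mathcal F$ in it is a Poisson subalgebra.

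The heart of the matter, and the step I expect to be the main obstacle, is to show that the Leibnizator of $\mathcal F(\mathfrak p)$ vanishes, so that $\mathcal F(\mathfrak p)$ is a Poisson algebra by the argument in the proof of Proposition \ref{PO}. The mechanism should be that, modulo $J$, the whole ``non-Poisson content'' of $\mathcal F$ is absorbed into the image of $Pois\,\mathcal F$, on which $L$ already vanishes by definition, and that this vanishing can then be propagated: $L$ is a derivation in its first slot by \eqref{falg1}, it is symmetric in its last two slots, and $e\in Pois\,\mathcal F$, so if $\mathcal F(\mathfrak p)$ is generated as an algebra over the image of $Pois\,\mathcal F$ by elements on which $L$ vanishes, a short induction yields $L\equiv 0$ everywhere. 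The genuinely delicate points are this generation hypothesis and the precise bookkeeping of which correction terms in repeated uses of \eqref{leib1} and \eqref{falg1} fall into $J$; should the elementary argument fail to close, the fallback is to invoke directly the description of $\mathbf{Spec}\,\mathcal F$ over $\mathbf{Spec}\,Pois\,\mathcal F$ in \cite{HM}. Once $L\equiv 0$ on $\mathcal F(\mathfrak p)$ is established, $\Upsilon^{-1}(\mathfrak p)=\mathbf{Spec}\,\mathcal F(\mathfrak p)$ is the spectrum of a Poisson algebra, as claimed.
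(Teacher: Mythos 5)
Your reduction of the statement to a claim about a quotient F-algebra is reasonable, and the topological bookkeeping (identifying $\Upsilon^{-1}(\mathfrak p)$ with $\mathbf{Spec}$ of $\mathcal F/J$, where $J$ is the F-ideal generated by $\mathfrak p$) is unobjectionable. But the proof does not close: the one assertion that carries all the content --- that the Leibnizator of the fibre algebra vanishes --- is never established. You make it conditional on a ``generation hypothesis'' (that $\mathcal F/J$ is generated over the image of $Pois\,\mathcal F$ by elements on which $L$ vanishes) which is not part of the data of an F-algebra and is not derivable from the axioms; and even granting it, your propagation argument has only \eqref{falg1} (derivation in the \emph{first} slot) and the symmetry of $L$ in its last two slots to work with, so it cannot reach $L(X,Z\circ Z',W)$ for general arguments --- nothing in the axioms makes $L$ a derivation in its second or third entry. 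The ``fallback'' of citing \cite{HM} is not a proof. More basically, there is no visible reason why $L(X,Y,Z)$ should lie in $\mathfrak p\mathcal F$ for arbitrary $X,Y,Z\in\mathcal F$, which is exactly what vanishing of $L$ on $\mathcal F/\mathfrak p\mathcal F$ would require.

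For comparison, the paper argues differently and pointwise: for each point $\mathfrak I$ of the fibre over $\mathcal I$ it forms the quotient $\mathfrak I/\mathcal I$, observes that the bracket descends, and asserts that the Leibnizator is trivial ``since $\mathcal I\subset Pois\,\mathcal F$''. So the paper does not realise the whole fibre as the spectrum of a single base-change algebra as you do; it attaches a Poisson algebra to each point of the fibre via a different quotient. Note, however, that the paper's key step is asserted rather than derived, and it is precisely the step you could not supply. A complete argument must exhibit the Poisson algebra whose spectrum the fibre is and actually verify $L\equiv 0$ on it, rather than deferring that verification to an unproved hypothesis or a citation.
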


\begin{proof}
Let $\mathcal{I} \in \mathbf{Spec\, Pois \, \mathcal F}$ and $\Upsilon^{-1}(\mathcal{I})$. For each $\mathfrak{I} \in \Upsilon^{-1}(\mathcal{I})$ consider the quotient $\mathcal F_{\mathfrak{I}} = \mathfrak{I}/\mathcal{I}$. This is an F-algebra with bracket [$a + \mathcal{I}, b + \mathcal{I}$] = [$a,b$] + $\mathcal{I}$. Since $\mathcal{I} \subset Pois \, \mathcal F$, then the Leibnizator is trivial, so we have a Poisson algebra. 
\end{proof}

\subsection{Super F-algebras}

The structure of F-algebra can be easily extended over a super vector space (or in other words, over a $\mathbb Z_2$ graded vector space). Denoting by $| X |$ the parity of a vector $X$ we have:

\begin{definition}
Let $\mathcal F= \mathcal F_0 \oplus \mathcal F_1$ be a super vector space, and $X,Y,W, Z  \in \mathcal F$. A super \textbf{F-algebra} is a triplet $(\mathcal F, \circ, [\;, \;])$ where 
\begin{enumerate}[i)]
\item $(\mathcal F, \circ)$ is an associative commutative super algebra with a unit $e$, that is
\begin{align*}
X \circ Y= &(-1)^{|X| |Y|} Y \circ X , , & (X \circ Y) \circ Z&=X \circ (Y \circ Z) \, , & X \circ e=& e \circ X \ ;
\end{align*}

\item $(\mathcal F, [\; ,\;])$ is a Lie super algebra, that is we have $[X,Y]=- (-1)^{|X| |Y|} [Y, X]$ and 

\begin{align*}
 (-1)^{|X| |Z|} [X, [Y, Z]]+(-1)^{|Y| |Z|} [Y, [Z, X]]+(-1)^{|Z| |Y|} [Z, [X, Y]]=0 \, ;
\end{align*}

\item defining the ``Leibnizator" $L(X,Z, W)$ as 
\be \label{leib}
L(X,Z, W):= [X, Z \circ W]-[X, Z] \circ W-  (-1)^{|X| |Z|} Z \circ [X, W] \ ;
\ee

\item the Leibnizator is a super derivation in its first entry, that is, 
\be \label{falg}
L(X \circ Y, Z, W)= X \circ L(Y,Z, W)+  (-1)^{|X| |Y|} Y \circ L(X,Z,W) \ .
\ee
\end{enumerate}
\end{definition}

Similarly we can define homomorphisms of super F-algebras and the category of super F-algebras. We also have a ``geometric'' dual of this category and an analogue of Proposition \ref{fib} holds in this case. 

\subsection{F-algebra Modules}
\begin{definition}
A vector space V is said to be a (left) \textbf{F-algebra module} if
\begin{enumerate}[i)]

\item V is a Lie algebra module, that is, for every $X, Y \in \mathcal F$ and $v \in V$ we have
\begin{align*}
\mathcal F \times V &\to V  \\
(X, v)  & \mapsto X \act{l} v
\end{align*}
satisfying 
\[  [X,Y] \act{l} v =X \act{l}  (Y \act{l } v)  - Y \act{l}  (X \act{l } v)  \ ; \]

\item V is a unital associative commutative algebra module, that is, for every $X, Y \in \mathcal F$ and $v \in V$ we have
\begin{align*}
\mathcal F  \times V &\to V  \\
(X, v)  & \mapsto X \act{a} v
\end{align*}
satisfying
\begin{align*}
(X \circ Y) \act{a} v =&\frac 12 \left(X \act{a}(Y \act{a} v) + Y\act{a}(X \act{a} v) \right) \ , &  e \act{a} v=& v \ ;
\end{align*}

\item defining $\widetilde L(X,Y,v) \in V$ as
\be \label{def.LXYv}
\widetilde L(X,Y,v):= X \act{l} (Y \act{a} v)- [X,Y] \act{a} v - Y \act{a} ( X \act{l} v ) \ , 
\ee
it has to satisfy, for every $X, Y, Z \in \mathcal F$ and $v \in V$,
\be \label{con.LXYv}
\widetilde L(X \circ Y, Z,v)= X \act{a} \widetilde L(Y,Z,v)+ Y \act{a} \widetilde L(X,Z,v)    \ ;
\ee

\item defining $\overline L(v, X,Y) \in V$ as
\be \label{def.LvXY}
\overline L(v,X,Y):=  ( X  \circ Y) \act{l} v - X \act{a} (Y \act{l} v)  - Y \act{a} ( X \act{l} v)  \ , 
\ee
it has to satisfy, for every $X, Y, Z \in \mathcal F$ and $v \in V$, 
\be \label{con.LvXY}
  \overline L(Z \act{a} v, X,Y) =  L(Z,X,Y) \act{a} v+ Z \act{a} \overline L(v,X,Y)    \ .
 \ee

\end{enumerate}

\end{definition}

\begin{proposition}
A Poisson module is an F-algebra module. 
\end{proposition}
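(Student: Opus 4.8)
The plan is to translate the axioms of a Poisson module over the Poisson algebra $\mathcal F=(\mathcal F,\circ,[\;,\;])$ into the four conditions in the definition of an F-algebra module and to observe that, because $\mathcal F$ is Poisson, everything collapses. Recall that a Poisson module is a vector space $V$ equipped with an action $\act{a}$ making it a unital module over the commutative associative algebra $(\mathcal F,\circ)$ and an action $\act{l}$ making it a module over the Lie algebra $(\mathcal F,[\;,\;])$, subject to the two compatibility identities
\[
X\act{l}(Y\act{a}v)=[X,Y]\act{a}v+Y\act{a}(X\act{l}v),\qquad (X\circ Y)\act{l}v=X\act{a}(Y\act{l}v)+Y\act{a}(X\act{l}v),
\]
for all $X,Y\in\mathcal F$ and $v\in V$.

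First I would dispatch conditions i) and ii). Condition i) is nothing but the Lie-module axiom for the action $\act{l}$. For condition ii), the plain associative-module identity $(X\circ Y)\act{a}v=X\act{a}(Y\act{a}v)$ combined with commutativity of $\circ$ yields $X\act{a}(Y\act{a}v)=Y\act{a}(X\act{a}v)$, so the symmetrized identity $(X\circ Y)\act{a}v=\tfrac12\big(X\act{a}(Y\act{a}v)+Y\act{a}(X\act{a}v)\big)$ follows, and $e\act{a}v=v$ is built into the definition of a unital module.

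Next, I would note that the two compatibility identities recalled above say exactly that the tensors $\widetilde L(X,Y,v)$ of (\ref{def.LXYv}) and $\overline L(v,X,Y)$ of (\ref{def.LvXY}) vanish identically on a Poisson module. Consequently the left-hand sides of (\ref{con.LXYv}) and (\ref{con.LvXY}) are zero. On the right-hand side of (\ref{con.LXYv}) each summand is the $\act{a}$-action of an element of $\mathcal F$ on a vanishing $\widetilde L$, hence zero; on the right-hand side of (\ref{con.LvXY}), the summand $Z\act{a}\overline L(v,X,Y)$ vanishes for the same reason, while $L(Z,X,Y)\act{a}v=0$ since the Leibnizator of the Poisson algebra $\mathcal F$ is identically zero (this is the content of Proposition \ref{PO}). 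Thus conditions iii) and iv) both reduce to the identity $0=0$.

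I do not expect any real obstacle: the argument is a direct unwinding of definitions once one records that on a Poisson module $\widetilde L\equiv 0$ and $\overline L\equiv 0$, and that the Leibnizator of $\mathcal F$ itself vanishes. The only place demanding a line of care is matching the symmetrized associative-module axiom in ii) with the ordinary Poisson-module axiom, which is resolved by commutativity of $\circ$.
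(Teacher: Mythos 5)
Your proposal is correct and follows essentially the same route as the paper: the Poisson-module compatibility identities say precisely that $\widetilde L\equiv 0$ and $\overline L\equiv 0$, and together with the vanishing of the Leibnizator $L$ of the Poisson algebra the conditions (\ref{con.LXYv}) and (\ref{con.LvXY}) hold trivially. Your version is somewhat more complete than the paper's, since you also verify the symmetrized axiom in ii) via commutativity of $\circ$, but the underlying argument is the same.
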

\begin{proof}
We can see that if  $\widetilde L(X,Y,v) \equiv 0$, $\overline L(v,X,Y) \equiv 0$ and $L(X,Y,Z) \equiv 0$, then the left hand side of (\ref{def.LXYv}) and (\ref{def.LvXY}) is zero, recovering the conditions that a Poisson algebra module has to satisfy, see \cite{O, J}. Moreover, equation (\ref{con.LXYv}) and (\ref{con.LvXY}) are trivially satisfied.
\end{proof}

Another interesting example is given by the so-called Adjoint Module. If we take the vector space $V$ as the F-algebra itself with the Lie algebra action $X \act{l}$ as $[X, \;]$ and the associative commutative algebra action $X \act{a}$ as $X \circ$ then $\widetilde L$ and $\overline{L}$ becomes the Leibnizator and they satisfy the property of an F-algebra that it is a derivation in the first entry. \\

We can also define the category of F-algebra modules as the category of $k$-linear representations of an F-algebra, where $k$ is a field of characteristic 0. The notion of F-algebra module is crucial for defining F-algebra-Rinehart pairs. This will be done in the next section.

\section{F-algebra--Rinehart Pair}

The definition of an F-R pair is an extension of the definition of a L-R pair (see the appendix \ref{LRpair} for the definition of L-R pairs).

\subsection{Definition}

\begin{definition}
An F-R pair denoted by  $(\mathcal F, C)$ consist of 
\begin{itemize}
 
\item an F-algebra $\mathcal F$ with Lie bracket $[\; , \;]$ and associative commutative product $\circ$, 

\item a commutative associative algebra $C$ with product $\cdot$, 
 
\end{itemize}
such that 
\begin{enumerate}[i)]
\item $C$ is an $\mathcal F$-module, with the actions of an element $X \in \mathcal F$ on an element $f \in C$ denoted by $X \act{l} f$ and $X \act{a} f$,

\item $\mathcal F$ is a $C$-module, with the action of $f \in C$ on $X \in \mathcal F$ denoted by $f \act{c} X$,  and defining $$\hat{L}(X,f,Y):=[X,f\act{c}Y]-(X\act{l} f)\act{c}Y+f\act{c}[X,Y] \ ,$$ 

it has to satisfy
 
 \[\hat{L}(X,f\cdot g,Y)=f\act{c}\hat{L}(X, g,Y) + g\act{c}\hat{L}(X, f,Y) \ .\]

\end{enumerate}
These data have the following compatibility between modules
\begin{enumerate}
\item compatibility between the $\mathcal F$-algebra products and the commutative associative algebra action

\begin{enumerate}

\item compatibility between $[\;, \;  ]$ and $\act{c}$ 

\[ [X, f \act{c} Y ]= (X \act{l} f) \act{c} Y + f \act{c}[X, Y] \ , \]

\item compatibility between $\circ$ and $\act{c}$,

\[  X \circ ( f \act{c} Y)=  \frac 12 \left( (X \act{a} f) \act{c} Y +f \act{c} (X \circ Y) \right) \]

\end{enumerate}

\item compatibility between the associative product $\cdot$ and $F$-algebra actions

\begin{enumerate}

\item compatibility between $\cdot$ and $\act{l}$ 

\[ f \cdot (X \act{l} g)= (f \act{c}X) \act{l} g \ ,\]

\item compatibility between $\cdot$ and $\act{a}$ 

\[ f \cdot (X \act{a} g)= (f \act{c}X) \act{a} g  \ ,\]

\end{enumerate}

\end{enumerate}

\end{definition}

\subsection{Examples}
Here we give some examples of  F-R pairs.
\begin{example}
For any F-algebra $\mathcal F$, let the commutative algebra $C$ be equal to that F-algebra, that is $C=\mathcal F$. The actions are the corresponding adjoint actions of $\mathcal F$ over $\mathcal F$.\\
\end{example}

\begin{example}
An F-manifold is a manifold $M$ with the structure of an F-algebra on sections $\Gamma(TM)$ of the tangent bundle $TM$. Now, consider the pair $\mathcal F=\Gamma(TM)$ and $\mathcal C=C^\infty(M)$ with $\act{l}$ the usual actions of vector fields on smooth function as derivations and $\act{c}$ the usual multiplication of function and vector fields. Then, if the action $\act{a}$ is the trivial one (mapping any function to itself)  it is easy to check that we obtain an F-R pair.\\
\end{example}





\section{Super F-algebroid}

An F-algebroid was defined in \cite{CMTG}, as a generalization of the concept of an F-manifold (there were also given some examples and applications there). We recall the definition from \cite{CMTG}:\\

\begin{definition}
Let $TM$ be the tangent bundle of a smooth manifold $M$;  $[ \; , \; ]$ a Lie bracket between section of $TM$;  $\circ$ denote a multiplicative commutative and associative structure on section of $TM$; and  $e \in \Gamma(TM)$ a unit vector field with respect to the multiplicative structure $\circ$. An F-algebroid is a  5-tuple $(E, [ \; , \; ]_E, \diamond, \mathcal U, \rho)$ where
\begin{enumerate}
\item
 $E$ is a vector bundle over a manifold $M$;
 \item
 $[\; , \;  ]_E: \Gamma(E) \times \Gamma(E) \to \Gamma(E)$ is a Lie bracket on sections of the vector bundle, that is, it is anti-symmetric and satisfies Jacobi identity $[\alpha, [\beta, \gamma]_E]_E+[\beta,[\gamma, \alpha ]_E]_E+ [\gamma, [\alpha,\beta ]_E]_E=0$;

\item $\diamond: \Gamma(E) \times \Gamma(E) \to \Gamma(E)$ is a commutative and associative multiplicative structure on sections of the vector bundle, that is, $\alpha \diamond \beta= \beta \diamond \alpha$ and $(\alpha \diamond \beta) \diamond \gamma= \alpha \diamond (\beta \diamond \gamma)$;

\item $\mathcal U \in \Gamma(E)$ is a unit with respect to the multiplicative structure $\diamond$, that is,  $\mathcal U \diamond \alpha= \alpha=\alpha \diamond  \mathcal U$;

\item
  $\rho: \Gamma(E) \to \Gamma(TM)$, called the anchor map, relates sections of the vector bundle to sections of the tangent bundle;
\end{enumerate}
satisfying the following conditions for $\alpha, \beta, \gamma, \tau \in \Gamma(E)$  and $f \in C^\infty(M)$:
\begin{enumerate}[a)]
\item defining the Leibizator $\mathcal L (\alpha, \beta, \gamma)$ as
\[
\mathcal L (\alpha, \beta, \gamma):= [\alpha, \beta \diamond  \gamma ]_E-[\alpha, \beta]_E \diamond \gamma- \beta \diamond [\alpha, \gamma]_E \ , \]
it satisfies \[\mathcal L(\alpha \diamond \beta, \gamma, \tau)= \alpha \diamond \mathcal L(\beta , \gamma,\tau)+ \beta \diamond \mathcal L (\alpha,\gamma, \tau) \ , \]
that is the Leibnizator is a derivation in its first entry;

\item homomorphism  $\rho(\alpha \diamond \beta)= \rho(\alpha) \circ \rho(\beta)$ \ ;

\item Leibniz rule,  $[\alpha, f\, \beta]_E=(\rho(\alpha)\,f) \beta+ f\,[\alpha, \beta]_E\ ;$

\item Lie algebra homomorphism, $\rho([\alpha, \beta]_E)=[\rho(\alpha), \rho(\beta)] \ .$\\[0.5cm]
\end{enumerate}

That is we have the following map
\[   (E, \diamond, [\; , \;  ]_E, \mathcal U ) \xrightarrow{\;\;\;\;\;\;\rho \;\;\;\;\;\;} (TM, \circ,[\; , \;  ], e)\]
satisfying a), b), c) and d) above. Note that the triple $(TM, \circ, e)$ is an F-manifold.\\
\end{definition}

Now, it is known that there is a one-to-one correspondence between vector bundles (of finite rank) and locally free sheaves of finite rank. In other words, a vector bundle $E$ over a smooth manifold $M$ can be equivalently described as a sheaf $\mathcal E_M$ of locally free modules of finite rank over the ring $C^{\infty}(M)$ of smooth functions. Following this idea, we get: 

\begin{definition}
A smooth F-algebroid is the data  $(M, \mathcal E_M, \diamond, \mathcal U,  [\;, \;]_{\mathcal E}; \rho)$ consisting of a smooth manifold $M$ and a sheaf $\mathcal E_M$ of locally free modules of finite rank over the ring $C^{\infty}(M)$ which for every open $U \subset M$ the module $\mathcal E(U)$ has been endowed with the structure of an F-algebra, with operations: $\diamond$ denoting an associative and commutivative multiplication, with an unit $\mathcal U$; and a Lie bracket $[\;, \;]_{\mathcal E}$. Moreover, we have a homomorphism $\rho:  (\mathcal E_M, \diamond, \mathcal U,  [\;, \;]_{\mathcal E}) \to (\mathcal T_M, \circ, e,  [\;, \;])$ of F-algebra structures satisfying the Leibniz rule:
\[   [\alpha, f\, \beta]_{\mathcal E}=(\rho(\alpha)\,f) \beta+ f\,[\alpha, \beta]_{\mathcal E}\ ,\]

where $\alpha, \beta \in \mathcal E(U)$ and $f \in C^\infty (U)$.

\end{definition}

\begin{remark}
In a similar way we can define a complex F-algebroid, an analytic F-algebroid and an algebraic F-algebroid just changing the category of smooth manifolds by the corresponding complex, analytic or algebraic category. \\
\end{remark}

The concept of an F-algebroid can be easily extended to the realm of superspaces if we replace the smooth (or complex, or analytic or algebraic) manifold in its definition for a  smooth (or complex, or analytic or algebraic) supermanifold. Remember that \cite{V, CCF}: a supermanifold $\mathcal M$ of dimension $(m,n)$ is a locally ringed space $(M, \mathcal O_{\mathcal M})$ where $M$ is a manifold and $\mathcal O_{\mathcal M}$ is a sheaf of super-algebras that is locally isomorphic to  $C^\infty(\mathbb R^m) \otimes \Lambda^\cdot(\theta^1, \cdots, \theta^n)$ with $\Lambda^\cdot(\theta^1, \cdots, \theta^n)$ a Grassmann algebra in $n$ odd generators.
We find the definition of a super F-algebroid basically adding the ``super'' in several structures of the definition above. That is:

\begin{definition}
A super F-algebroid is the data  $(\mathcal M, \mathcal E_{\mathcal M}, \diamond, \mathcal U,  [\;, \;]_{\mathcal E}; \rho)$ consisting of a supermanifold $\mathcal M$ and a sheaf $\mathcal E_{\mathcal M}$ of locally free super modules over $\mathcal O_{\mathcal M}$ which for every open $U \subset \mathcal M$ the module $\mathcal E(U)$ has been endowed with the structure of a super F-algebra, with operations: $\diamond$ denoting an associative and super commutivative multiplication, with an unit $\mathcal U$; and a super Lie bracket $[\;, \;]_{\mathcal E}$. Moreover, we have a homomorphism $\rho:  (\mathcal E_{\mathcal M}, \diamond, \mathcal U,  [\;, \;]_{\mathcal E}) \to (\mathcal T_{\mathcal M}, \circ, e,  [\;, \;])$ of super F-algebra structures satisfying the Leibniz rule:
\[   [\alpha, f\, \beta]_{\mathcal E}=(\rho(\alpha)\,f) \beta+ f\,[\alpha, \beta]_{\mathcal E}\ ,\]

where $\alpha, \beta \in \mathcal E(U)$ and $f \in C^\infty (U)$.

\end{definition}

\subsection{The Pair $(\mathcal E (U) , C^\infty(U))$}
It is possible to encode algebraically the notion of super F-algebroid in terms of the concept of F-R pair as the following theorem shows.

\begin{theorem}
Let $(\mathcal M, \mathcal E_{\mathcal M}, \diamond, \mathcal U,  [\;, \;]_{\mathcal E}; \rho)$ be a super F-algebroid. Then, for every open $U\subset \mathcal M$, the pair $(\mathcal E (U) , C^\infty(U))$ is an F-alg--Rinehar pair.
\end{theorem}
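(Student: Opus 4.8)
The plan is to unwind the definitions on both sides and exhibit the required actions explicitly. Fix an open set $U \subset \mathcal M$, and write $\mathcal F = \mathcal E(U)$ (a super F-algebra) and $C = C^\infty(U)$ (a commutative associative algebra, here the even part of $\mathcal O_{\mathcal M}(U)$). First I would specify the three actions demanded by the definition of an F-R pair: for $X \in \mathcal F$ and $f \in C$, set $X \act{l} f := \rho(X)\, f$ (the derivation coming from the anchor, valued in $C^\infty(U)$), set $X \act{a} f := (\text{image of }f\text{ under the unit})$, i.e. the trivial action as in Example 2 so that the associative-module axiom $e \act{a} f = f$ and $(X\circ Y)\act{a} f = \tfrac12(X\act{a}(Y\act{a}f)+Y\act{a}(X\act{a}f))$ hold automatically; and set $f \act{c} X := f\, X$, the $C^\infty(U)$-module structure on the locally free module $\mathcal E(U)$. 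With these choices the bulk of the work is bookkeeping.

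Next I would verify the module axioms in turn. That $C$ is a Lie-algebra module over $(\mathcal F,[\;,\;])$ is exactly the statement $\rho([X,Y]) = [\rho(X),\rho(Y)]$ together with the fact that vector fields act on functions as commuting-to-the-bracket derivations; this is the homomorphism property of $\rho$ into $(\mathcal T_{\mathcal M},\circ,e,[\;,\;])$. That $C$ is an associative-commutative-algebra module is trivial by the choice of $\act{a}$. The compatibility conditions $f\cdot(X\act{l}g) = (f\act{c}X)\act{l}g$ and $f\cdot(X\act{a}g)=(f\act{c}X)\act{a}g$ reduce, respectively, to the $C^\infty(U)$-linearity of the anchor ($\rho(fX) = f\rho(X)$, which follows from the Leibniz rule and $C^\infty$-linearity of $\mathcal E_M$ as a sheaf) and again to triviality of $\act{a}$. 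The compatibility between $\circ$ and $\act{c}$, namely $X\circ(f\act{c}Y) = \tfrac12\big((X\act{a}f)\act{c}Y + f\act{c}(X\circ Y)\big)$, becomes $X\diamond(fY) = \tfrac12(fY\diamond? \dots)$ — here one uses that $\diamond$ is $C^\infty(U)$-bilinear on $\mathcal E(U)$, so $X\diamond(fY) = f(X\diamond Y)$, and with $X\act{a}f = f$ the right-hand side is $\tfrac12(f(X\diamond Y) + f(X\diamond Y)) = f(X\diamond Y)$; so this holds.

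The substantive points are the ones involving the Leibnizator-type identities: the compatibility between $[\;,\;]$ and $\act{c}$, i.e. $[X, f\act{c}Y]_{\mathcal E} = (X\act{l}f)\act{c}Y + f\act{c}[X,Y]_{\mathcal E}$, which is precisely the Leibniz rule $[\alpha, f\beta]_{\mathcal E} = (\rho(\alpha)f)\beta + f[\alpha,\beta]_{\mathcal E}$ built into the definition of a super F-algebroid; and axiom ii) of the F-R pair, namely that $\hat L(X,f,Y) := [X, f\act{c}Y] - (X\act{l}f)\act{c}Y + f\act{c}[X,Y]$ is a derivation in the middle slot, $\hat L(X, f\cdot g, Y) = f\act{c}\hat L(X,g,Y) + g\act{c}\hat L(X,f,Y)$. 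But by the Leibniz rule just quoted, $\hat L(X,f,Y) = 0$ identically for every $X,Y,f$, so the derivation property holds trivially (both sides vanish). Thus the only place where anything could go wrong is consistency of the chosen $\act{a}$ with the F-algebra-module axiom \eqref{con.LvXY} relating $\overline L$ to the Leibnizator $L$ of $\mathcal F$; with $\act{a}$ trivial one checks $\overline L(v,X,Y)$ and $L$ both act as the identity-twisted zero, and the identity reduces to $0 = 0$. I expect the main obstacle to be purely organizational: one must be careful that "$C$ is an $\mathcal F$-module" is meant in the sense of the F-algebra-module definition of Section 2, so every one of its four axioms (including the two Leibnizator conditions \eqref{con.LXYv}, \eqref{con.LvXY}) must be checked against the trivial $\act{a}$ and the anchor-derived $\act{l}$ — and in the super setting one must insert the Koszul signs correctly in the Leibnizator \eqref{leib} and in the Lie-module axiom, tracking the parity $|f|$ of elements of $\mathcal O_{\mathcal M}(U)$. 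Once the dictionary is set up, each verification collapses to a one-line consequence of the super-F-algebroid axioms a)--d) or to triviality of $\act{a}$, and the theorem follows.
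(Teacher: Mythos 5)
Your construction diverges from the paper's at the one point where the ``super'' hypothesis is supposed to do work: the choice of the associative action $\act{a}$. The paper does \emph{not} take $\act{a}$ to be trivial. It sets $X:=\rho(\alpha)$, writes the resulting super vector field in local coordinates as $D_X=\sum_\mu X^\mu\,\partial/\partial t^\mu+\sum_i X^i\,\partial/\partial\theta^i$, and defines $\alpha\act{l}f:=D^{\text{even}}_Xf$ (the $\partial/\partial t^\mu$ part) and $\alpha\act{a}f:=D^{\text{odd}}_Xf$ (the $\partial/\partial\theta^i$ part). The existence of this odd component is precisely why the theorem is stated for \emph{super} F-algebroids; your version, in which $\act{l}$ is the full anchor derivation and $\act{a}$ is the identity, would apply verbatim to an ordinary smooth F-algebroid, which should already signal that something is being missed.

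More concretely, the trivial $\act{a}$ does not satisfy the F-algebra-module axioms of Section 2.2, so the step where you assert that conditions \eqref{con.LXYv} and \eqref{con.LvXY} ``reduce to $0=0$'' fails. With $W\act{a}v=v$ for all $W$, the definition \eqref{def.LXYv} gives $\widetilde L(X,Y,v)=X\act{l}v-v-X\act{l}v=-v$ identically, so \eqref{con.LXYv} would require $-v=-2v$; likewise in \eqref{con.LvXY} the term $L(Z,X,Y)\act{a}v$ equals $v$ under the trivial action while $\overline L(Z\act{a}v,X,Y)=\overline L(v,X,Y)$, forcing $v=0$. (The paper's own Example 2 makes the same claim about the trivial action, but that does not repair your argument here.) Separately, your computation $\hat L(X,f,Y)=0$ silently uses the full derivation $\rho(X)f$ for $X\act{l}f$ and a sign convention for $\hat L$; under the paper's definition of $\act{l}$ as only the even part, $\hat L$ picks up the odd remainder and the cancellation you invoke is not automatic. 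The correct proof must introduce the even/odd splitting of the anchor-derived derivation and verify the module and compatibility axioms against \emph{that} pair of actions.
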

\begin{proof}
We know that:
\begin{itemize}
\item The ring $C^\infty(U)$ of smooth functions on $U \subset \mathcal M$ is a commutative associative algebra over $\mathbb R$;

\item from the  definition of super F-algebroid, the module $\mathcal E(U)$ over the ring $C^\infty(U)$ has the structure of a super F-algebra. 
\end{itemize}
Note that the second item says that the super F-algebra $\mathcal E(U)$ is a $C^\infty(U)$-module, that is  there exist an action $f \act{c} \alpha$ of $f \in C^\infty(U)$ on $\alpha \in \mathcal E(U)$. Then the only missing part is to show that $C^\infty(U)$ is a $\mathcal E(U)$-module and check the compatibility conditions between modules. \\
Let   $\alpha \in \mathcal E(U)$ and $f \in C^\infty(U)$. We define the actions $\alpha \act{l} f$ and $\alpha \act{a} f$  by first applying the anchor map and then using a canonical action of super vector fields on functions which in the first case is an even super vector field and in the second an odd super vector field.  In other words, let $X$ be a super vector field such that $X=\rho(\alpha)$. Denoting by $D^{\text{even}}_X$ the even part of the derivation that represent the super vector field X over the ring of functions $C^\infty(M)$  and by  $D^{\text{odd}}_X$ the odd part of the derivation, we have:
\begin{align}
\alpha \act{l} f=&D^{\text{even}}_X f \ , \\  
\alpha \act{a} f=&D^{\text{odd}}_X f \ .
\end{align}
On $U \subset \mathcal M$ with super coordinates $(t^1, \cdots, t^m;\theta^1, \cdots, \theta^n)$ on $\mathbb R^{m|n}$, we have the following coordinate representation of $D_X$ , $D^{\text{even}}$ and $D^{\text{odd}}$
\begin{align}
D_X=& \sum_{\mu=1}^m X^\mu(t, \theta) \frac{\partial}{\partial t^\mu}+ \sum_{i=1}^n X^i(t, \theta) \frac{\partial}{\partial \theta^i} \ , \\
D^{\text{even}}_X=& \sum_{\mu=1}^m X^\mu(t, \theta) \frac{\partial}{\partial t^\mu} \  , \\
D^{\text{odd}}_X=& \sum_{i=1}^n X^i(t, \theta) \frac{\partial}{\partial \theta^i} \ .
\end{align}

Finally, the compatibility conditions of the modules are easily checked. 
\end{proof}


\section{Final Remarks}

In \cite{Hi, A} the notion of a Frobenius manifold was recovered using the concept of Higgs pair which is an extension of that of a Higgs bundle. On the other hand, we can define holomorphic F-algebroids working in the category of complex manifolds. In particular, we can consider working with Riemann surfaces. Following this ideas, we expect that  holomorphic F-algebroids structures over a Riemann surfaces should be closely related to Higgs bundles (this is work in progress). \\ 

Related to the study of F-algebroids over Riemann surfaces it would be needed to study more deeply the concept of super F-algebroid and its applications. This should be done in analogy with the developments of super Frobenius manifolds due to Manin and his collaborators. In this paper we have shown that the concept of super F-algebroid is crucial in order to get an algebraic description in terms of F-R pairs. This is an interesting example of the duality between algebraic and geometric objects that deserves further study. \\

Following Polischuck \cite{P} it should be possible to define an F-structure over a scheme. We think that an interesting problem to tackle is: \\

{\bf Problem 1}. Let $X$ be an F-scheme of finite type over $\mathbb{C}$, and $X_{red}$ be the corresponding reduced scheme. Then prove that $X_{red}$ and all its irreducible components are F-subschemes of $X$. \\

An affirmative answer to this problem will allow us to see the affine version of a super F-algebroid as a F-R pair. In addition, the concept of F-structure over a scheme could also be extended to superschemes following \cite{CCF, V}. We think that this approach will be useful in order to get a more algebraic geometric approach for the concept of Frobenius manifolds. This is work in progress.\\

It would be very interesting to describe the notion of ``F-groupoid'', that is, the global object  whose infinitesimal counterpart is an F-algebroid. The natural problem to solve is: \\

{\bf Problem 2}. Find conditions to determine when an F-algebroid is integrable. \\

In the case of a Lie algebroid Crainic and Fernandez showed \cite{CF} that the integrability problem is controlled by two computable obstructions, so the natural way to approch the above problem is to see whether Cranic and Fernandez conditions extend to the case of F-algebroids. It might be possible that the case of F-algebroids requires a novel approach. This is also work in progress. \\

Finally, we are currently constructing an F-alg--Rinehart operad whose algebras are F-R pairs. This is a color operad similar to the Lie-Rinehart operad described in \cite{L}. The F-alg--Rinehart operad has to be closely related to the $\bf{FMan}$ operad defined by Dotsenko \cite{D} and the minimal resolutions of Lie-Bialgebra operads and Gerstenhaber operads described by Merkulov in \cite{Me1, Me2}. These results will be presented in a forthcoming paper.

 

\begin{appendices}


\section{Lie--Rinehart Pair}\label{LRpair}

The notion of Lie algebroid can be encoded in the algebraic structure of a L-R pair. Some references on L-R pairs are \cite{Hu, Ma}.

\begin{definition}
A L-R pair denoted by the couple $(L, C)$ consist of 
\begin{itemize}
 
\item a Lie algebra $L$ with Lie bracket $[\; , \;]$ and 

\item a commutative associative algebra $C$ with product $\cdot$, 
 
\end{itemize}
such that 
\begin{enumerate}[i)]
\item $C$ is an $L$-module, that is, for every $X, Y \in L$ and $f \in C$ we have
\begin{align*}
L \times C &\to C  \\
(X, f)  & \mapsto X \act{l}f
\end{align*}
satisfying (compatibility between the Lie product and the Lie algebra action)
\[  [X,Y] \act{ l} f= X  \act{l}( Y \act{l} f )- Y  \act{l}( X \act{l} f) \ ,  \]

\item L is an $C$-module, that is, for every $f, g \in C$ and $X \in L$ we have
\begin{align*}
C \times L &\to L  \\
(f, X)  & \mapsto f \act{c} X
\end{align*}
satisfying (compatibility between the commutative associative product and the commutative associative algebra action)
\[  (f\cdot g) \act{c} X= f \act{c} (g \act{c} X)   \ ;\]

\end{enumerate}
and with the following compatibility between modules
\begin{enumerate}
\item compatibility between the Lie product and the commutative associative algebra action

\[ [X, f \act{c} Y ]= (X \act{l} f) \act{c} Y + f \act{c} [X, Y]  \ , \]

\item compatibility between the commutative associative product and the Lie algebra action

 \[ f \cdot (X \act{l} g)= (f \act{c}X) \act{l} g  \ .\]  \\

\end{enumerate}
\end{definition}

Given a Lie algebroid $(E\to M, [ \, , \, ]_E, \rho)$, where $E$ is a vector bundle over a manifold $M$, $[ \, , \, ]_E$ is a Lie bracket on sections $\Gamma(E)$ of the vector bundle and $\rho: \Gamma(E) \to \Gamma(TM)$ is the anchor map, we can naturally construct a L-R pair $(L, C)$ as follows: 
\begin{itemize}
\item let $L=\Gamma(E)$ be the Lie algebra of sections of the vector bundle $E$;

\item let $C=\mathcal C^\infty(M)$ be the commutative algebra of smooth functions on the manifold $M$;

\item the Lie action $\act{l}$ of $L=\Gamma(E)$ on $C=\mathcal C^\infty(M)$ is given by first applying the anchor map $\rho$ and then using the canonical action of vector fields on functions;

\item the commutative action $\act{c}$ of $C=\mathcal C^\infty(M)$ on $L=\Gamma(E)$ is the usual multiplication of sections of vector bundles over $M$ by functions on $M$.\\[0.6cm]
\end{itemize}

\end{appendices}

\textbf{\large Acknowledgements} \\

A. Torres-Gomez is partially supported by Universidad del Norte grant number 2018-17 ``Agenda I+D+I". J.A. Cruz Morales wants to thank the Institute Des Hautes \'Etudes Scientifiques (IHES) for its hospitality and support during his visit in December 2018 where part of this work was done. A. Torres-Gomez also would like to thank the Max Planck Institute for Mathematics (MPIM) for its hospitality and stimulating atmosphere during his visit in December 2018; part of this paper was completed during his stay at MPIM. \\[0.5cm]


\addcontentsline{toc}{section}{References}

------------------------------\\
John Alexander Cruz Morales,\\
Departamento de Matem\'aticas, Universidad Nacional de Colombia,\\
Bogot\'a, Colombia,\\ 
jacruzmo@unal.edu.co\\ [0.3cm]

Javier A. Gutierrez\\
Departamento de Matem\'aticas, Universidad Sergio Arboleda,\\
Bogot\'a, Colombia,\\ 
javier.gutierrez@usa.edu.co\\[0.3cm]

Alexander Torres-Gomez\\
Departamento de Matem\'aticas y Estad\'istica, Universidad del Norte,\\
Barranquilla, Colombia,\\ 
alexander.torres.gomez@gmail.com

\end{document}